\definecolor{red}{rgb}{1,0,0}
\title[Distributionally Robust Lyapunov Function Search Under Model Uncertainty]{Distributionally Robust Lyapunov Function Search Under Uncertainty}
\author{\Name{Kehan Long} \Email{k3long@ucsd.edu}\\
 \Name{Yinzhuang Yi} \Email{yiyi@ucsd.edu}\\
 \Name{Jorge Cort{\'e}s} \Email{cortes@ucsd.edu}\\
 \Name{Nikolay Atanasov} \Email{natanasov@ucsd.edu}\\
 \addr Contextual Robotics Institute, University of California San Diego, La Jolla, CA 92093, USA}
\newcommand{\calD}{{\cal D}}
\newcommand{\calF}{{\cal F}}
\newcommand{\calL}{{\cal L}}
\newcommand{\calM}{{\cal M}}
\newcommand{\calN}{{\cal N}}
\newcommand{\calP}{{\cal P}}
\newcommand{\calU}{{\cal U}}
\newcommand{\calX}{{\cal X}}
\newcommand{\bfx}{\mathbf{x}}
\newcommand{\bfz}{\mathbf{z}}
\newcommand{\bftheta}{\boldsymbol{\theta}}
\newcommand{\bfxi}{\boldsymbol{\xi}}
\newcommand{\bbE}{\mathbb{E}}
\newcommand{\bbN}{\mathbb{N}}
\newcommand{\bbP}{\mathbb{P}}
\newcommand{\bbQ}{\mathbb{Q}}
\newcommand{\bbR}{\mathbb{R}}
\begin{document}

\maketitle

\begin{abstract}%
This paper develops methods for proving Lyapunov stability of dynamical systems subject to disturbances with an unknown distribution. We assume only a finite set of disturbance samples is available and that the true online disturbance realization may be drawn from a different distribution than the given samples. We formulate an optimization problem to search for a sum-of-squares (SOS) Lyapunov function and introduce a distributionally robust version of the Lyapunov function derivative constraint. We show that this constraint may be reformulated as several SOS constraints, ensuring that the search for a Lyapunov function remains in the class of SOS polynomial optimization problems. For general systems, we provide a distributionally robust chance-constrained formulation for neural network Lyapunov function search. Simulations demonstrate the validity and efficiency of either formulation on non-linear uncertain dynamical systems. 



\end{abstract}

\begin{keywords}%
Lyapunov stability, distrib. robust optimization, sum of squares, neural networks. 
\end{keywords}

\subsection*{Supplementary Material}
Open-source implementation available at \href{https://github.com/KehanLong/DR-Lyapunov-Function}{https://github.com/KehanLong/DR-Lyapunov-Function}.

\vspace*{-1ex}
\section{Introduction}\label{sec: intro}

A Lyapunov function (LF) is one of the main tools for analyzing the stability of nonlinear dynamical systems \citep{khalil1996nonlinear}.
Similarly, control synthesis for open-loop control-affine systems is often done using a control Lyapunov function (CLF) \citep{Artstein1983StabilizationWR} since a stabilizing controller can be obtained from a CLF using a universal formula \citep{SONTAG1989117} or quadratic programming \citep{Galloway2015TorqueSI}.
Various techniques exist for obtaining LF or CLF candidates but the majority assume that the system model is known. In this paper, we study the problem of synthesizing a Lyapunov function when the system model is uncertain.





Synthesizing a valid LF for a linear system can be formulated as a semi-definite program (SDP) \citep{Boyd_LMI_control}. \cite{parrilo2000structured, Papachristo_2002_sos_lf} generalized the formulation for non-linear polynomial systems by using sum-of-squares (SOS) polynomials to represent an LF candidate. For polynomial systems with uncertainty, \cite{Ahmadi_2016robust_sos, Lasserre2015TractableAO} extended SOS techniques to find robust LFs based on known error bounds, see \cite{Laurent2009_sos} for 
a general exposition.
The lack of a valid SOS LF does not imply that the system instability since there exist positive-definite functions that are not representable as SOS \citep{HilbertUeberDD}. 



Using a neural network as a more general LF representation than an SOS polynomial has been gaining increasing popularity. \cite{Richards2018TheLN} proposed a neural network approach for discrete-time non-linear systems that learns the region of attraction of a given controller. \cite{Boffi2020LearningSC, gaby2021lyapunov_net} improved the efficiency of learning LFs by incorporating positive-definiteness and equilibrium conditions directly into the network architecture. \cite{Chang2019NeuralLC, dai_2021_lyapunov} considered learning neural network CLFs and controllers by minimizing violations of the conditions for a valid LF. \cite{dawson_2022_robust_CLBF} extended the idea to learn safety certificates as control Lyapunov-barrier functions and also considered control-affine systems with convex-hull uncertainty. The survey by \cite{Dawson_2022_survey} provides a recent account of this line of research.  Despite their expressiveness, neural network methods do not offer theoretical guarantees for the validity of the learned Lyapunov function over the entire state space.
%
%

%

The stability guarantees provided by either SOS or neural network LFs are sensitive to uncertainty or disturbances in the system model. 
A related body of work \citep{Choi2020ReinforcementLF, Taylor_2019iros,Castaneda_GPCLF_ACC21, dhiman2020control, Long2022RAL} assumes that an LF certificate is given for a nominal system and develops approaches to adapt it by taking the model uncertainty into account during deployment. 
In input-to-state stability (ISS) \citep{iss_sontag}, one deals directly with an uncertain dynamical system to provide robustness guarantees on graceful degradation of stability as a function of the disturbance input magnitude. This can be ensured via an ISS-Lyapunov function \citep{sontag1995characterizations}, e.g., constructed  using SOS techniques \citep{ISS_LF_joao,ISS_LF_SOS_rick}. Without assuming any known distribution or error bounds on the model uncertainty, in this paper, we utilize distributionally robust constraints \citep{AS-DD-AR:14, Esfahani2018DatadrivenDR} to enforce LF conditions for an uncertain system model with only finitely many uncertainty samples obtained offline.



Distributionally robust chance-constrained programming (DRCCP) deals with uncertain variables in the constraints using finitely many available samples. The main idea is to construct an ambiguity ball centered at the empirical distribution of the observed samples using a distribution distance function, such as Wasserstein distance \citep{Esfahani2018DatadrivenDR,Hota2019DataDrivenCC}. Then, the constraints are required to be satisfied with high probability for all distributions in the ambiguity ball. Given its powerful guarantee to handle uncertainty with an unknown or shifting distribution, DRCCP has been applied in several areas in systems and control \citep{Coulson_19_cdc, Boskos_21_TAC, long2022_clf_cbf_drccp, coppens_2020_l4dc}. However, its application to LF search here is novel.

\textbf{Contributions}: 1) We formulate a distributionally robust version of the Lyapunov function derivative constraint for uncertain dynamical systems using finitely many offline samples. 2) For polynomial systems, we show that the distributionally robust constraint can be reformulated as multiple SOS constraints, ensuring that LF synthesis with uncertainty remains an SOS polynomial optimization. 3) For general nonlinear systems, we propose a distributionally robust neural network approach for learning Lyapunov functions.





\vspace*{-1ex}
\section{Background}\label{sec: prelim}
Here\footnote{The sets of non-negative real and natural numbers are denoted $\bbR_{\geq 0}$ and $\bbN$. For $N \in \bbN$, $[N] := \{1,2, \dots N\}$. We denote the distribution and expectation of a random variable $Y$ by $\mathbb{P}$ and $\bbE_{\bbP}(Y)$, resp. 
For a scalar $x$, $(x)_+ := \max(x,0)$.  We use $\boldsymbol{0}$ to denote the $n$-dimensional vector with all entries equal to $0$. The gradient of a differentiable function $V$ is denoted by $\nabla V$, and its Lie derivative along a vector field $f$ by $\calL_f V  = \nabla V \cdot f$. 
We denote a uniform distribution on $[a,b]$ as $\calU(a,b)$ and a Gaussian distribution with mean $\mu$ and variance $\sigma^2$ as~$\calN(\mu,\sigma^2)$.}, we give an overview of sum-of-squares (SOS) techniques for Lyapunov function synthesis and distributionally robust chance constraints.

\vspace*{-1ex}
\subsection{Lyapunov Theory and Sum-of-Squares Optimization}
\label{sec: sos_lyapunov_original}
Consider a dynamical system, $\dot{\bfx} = f(\bfx)$, with state $\bfx \in \calX \subseteq \bbR^n$. Assume $f : \mathbb{R}^{n} \mapsto \mathbb{R}^{n}$ is locally Lipschitz and the origin $\bfx = \mathbf{0}$ is the desired equilibrium , i.e., $f(\mathbf{0}) = \mathbf{0}$.
%
A valid Lyapunov function, ensuring the stability of the origin, satisfies:
\begin{equation}
\label{eq: original_lf_condition}
    V(\boldsymbol{0}) = 0,  \; V(\bfx) > 0 \; \text{and} \; \dot{V}(\bfx) < 0, \; \forall \bfx \neq \boldsymbol{0},
\end{equation}
%
where $\dot{V}(\bfx) = \calL_f V(\bfx)$.
If the LF is also radially unbounded ($V(\bfx) \to \infty$ as $\|\bfx\| \to \infty$), then its existence implies global asymptotic stability. The second and third conditions in~\eqref{eq: original_lf_condition}
are implied by
\begin{equation}
\label{eq: lf_to_sos}
    \quad V(\bfx) - \epsilon\|\bfx\|_2^2 \geq 0 \; \text{and} \; -\dot{V}(\bfx) - \epsilon\|\bfx\|_2^2 \geq 0, \; \forall \bfx \neq \boldsymbol{0},
\end{equation}
for some $\epsilon \in \bbR_{>0}$.
A natural way of imposing non-negativity is by using SOS polynomials. A polynomial $\eta(\bfx)$ of degree $2d$ is called an SOS polynomial if and only if there exist polynomials $s_1(\bfx), \dots, s_p(\bfx)$ of degree at most $d$ such that $\eta(\bfx) = \sum_{i=0}^{p}s_i(\bfx)^2$. Based on the positive-definiteness property of SOS polynomials, \citet{parrilo2000structured, Papachristo_2002_sos_lf} proposed the following SOS conditions, which are sufficient to imply \eqref{eq: original_lf_condition},
\begin{equation}
\label{eq: original_lf_sos}
     V(\bfx) = \sum_{k=0}^{2d} c_k \bfx^k, \; c_0 = 0; \quad V(\bfx) - \epsilon\|\bfx\|_2^2 \in \text{SOS}(\bfx); \; \;
     -\dot{V}(\bfx) - \epsilon\|\bfx\|_2^2 \in \text{SOS}(\bfx),
\end{equation}
where $\text{SOS}(\bfx)$ denotes the set of SOS polynomials in variable $\bfx$. By fixing a polynomial degree $d$, one can search for an SOS LF using a semidefinite program \citep{Laurent2009_sos} enforcing~\eqref{eq: original_lf_sos}.

\vspace*{-1ex}
\subsection{Conditional Value-at-Risk and Distributionally Robust Chance Constraint }\label{sec: prelim_cvar_drccp}


We review chance-constraint formulations that will be useful to handle model uncertainty. Consider a complete separable metric space $\Xi$ with metric $d$, and associate to it a Borel $\sigma$-algebra $\calF$ and the set $\calP(\Xi)$ of Borel probability measures on $\Xi$.
A chance constraint can be written as,  
\begin{equation}\label{eq: ccp}
    \bbP^*(G(\bfz, \bfxi) \leq 0) \geq 1 - \beta, 
\end{equation}
where the constraint function $G(\bfz, \bfxi) \in \mathbb{R}^n \times \Xi \mapsto \mathbb{R}$ depends both on a decision vector $\bfz$ and a random variable $\bfxi$ with distribution $\bbP^* \in \calP(\Xi)$, and $\beta \in (0,1)$ is a user-specified risk tolerance. The feasible set for $\bfz$ defined by \eqref{eq: ccp} is not convex. \eqref{eq: ccp}, \cite{Nemirovski2006ConvexAO} proposed a Conditional Value-at-Risk (CVaR) approximation of the chance constraint, which results in a convex feasible set and is sufficient for \eqref{eq: ccp} to hold:
\begin{equation}\label{eq: cvar_ccp}
    \textrm{CVaR}_{1-\beta}^{\bbP^*}(G(\bfz,\bfxi)) \leq 0.
\end{equation}
%
%
For a random variable $\xi \in \bbR$ with distribution $\hat{\bbP}$, the Value-at-risk (VaR) at confidence level $1 - \beta$ is  $\textrm{VaR}_{1-\beta}^{\hat{\bbP}}(\xi) := \inf_{t \in \bbR}\{t \; | \; \hat{\bbP}(\xi \leq t) \geq 1 - \beta\}$. The CVaR of $\xi$ is  $\textrm{CVaR}_{1-\beta}^{\hat{\bbP}}(\xi) := \bbE_{\hat{\bbP}} [ \xi \; | \; \xi \geq \textrm{VaR}_{1-\beta}^{\hat{\bbP}}(\xi)]$ and can be formulated as a convex program \citep{Rockafellar00optimizationof}: 
\begin{equation}
\label{eq: cvar_opti_def}   
    \textrm{CVaR}_{1-\beta}^{\hat{\bbP}}(\xi) = \inf_{t \in \mathbb{R}}[\beta^{-1}\mathbb{E}_{\hat{\bbP}}[(\xi+t)_+]-t].
\end{equation}


The chance constraint in \eqref{eq: ccp} or \eqref{eq: cvar_ccp} cannot be specified if the distribution $\bbP^*$ of $\bfxi$ is unknown. In robotics and control applications, it is common that only finitely many samples $\{\bfxi_i\}_{i=1}^N$ from $\bbP^*$ are available. This motivates a distributionally robust formulation of the chance constraint \citep{Esfahani2018DatadrivenDR, Xie2021OnDR}. Let $\calP_p(\Xi) \subseteq \calP(\Xi)$ be the set of Borel probability measures with finite $p$-th moment for $p \geq 1$. The $p$-Wasserstein distance between two probability measures $\mu$, $\nu$ in $\calP_p(\Xi)$ is defined as \citep[see, for example][]{Chen2018DataDrivenCC, Xie2021OnDR}:
%
    $W_{p}(\mu,\nu) := \big(\inf_{\gamma \in \bbQ(\mu,\nu)} \big[ \int_{\Xi \times \Xi} d(x,y)^p \text{d}\gamma(x,y) 
    \big] \big)^{\frac{1}{p}}, $
%
where $\bbQ(\mu,\nu)$ denotes the collection of all measures on $\Xi \times \Xi$ with marginals $\mu$  and $\nu$ on the first and second factors, and $d$ denotes the metric in $\Xi$.

We denote by $\hat{\mathbb{P}}_N :=  \frac{1}{N}\sum_{i=1}^N \delta_{\bfxi_i}$ the discrete empirical distribution of samples $\{\bfxi_i\}_{i=1}^N$. Using the Wasserstein distance, we define an ambiguity set $\calM_{N}^{r} := \{\mu \in \calP_p(\Xi) \; | \; \allowbreak W_p(\mu,\hat{\mathbb{P}}_{N} ) \leq r\}$ as a ball of distributions with radius $r$ centered at $\hat{\mathbb{P}}_N$. We write a distributionally robust version of the chance constraint in \eqref{eq: ccp} as $\inf_{\bbP \in \calM_{N}^{r}}\mathbb{P}(G(\bfz, \bfxi) \leq 0) \geq 1 - \beta$ or equivalently $\sup_{\bbP \in \calM_{N}^{r}}\mathbb{P}(G(\bfz, \bfxi) \geq 0) \leq \beta$. Thus, similar to the CVaR approximation in \eqref{eq: cvar_ccp}-\eqref{eq: cvar_opti_def}, one considers the sufficient constraint
$
    \sup_{\bbP \in \calM_{N}^{r}}\inf_{t \in \mathbb{R}}[\beta^{-1} \mathbb{E}_{\bbP}[(G(\bfz,\bfxi)+t)_{+}] -t] \leq 0
$, which is convex in $\bfz$.

%
%
%
%

%

\vspace*{-1ex}
\section{Problem Formulation}\label{sec: problem_formulation}

We aim to analyze Lyapunov stability for a dynamical system subject to model uncertainty: 
\begin{equation}\label{eq: uncertain_closed_loop_system}
\dot{\bfx} =  f(\bfx) + \sum_{i = 1}^m d_i(\bfx)\xi_i = f(\bfx) + d(\bfx)\bfxi, 
\end{equation}
where $d_i: \bbR^n \mapsto \bbR^n$ is locally Lipschitz. We assume that $d(\bfx) = [d_1(\bfx), \dots, d_m(\bfx)] \in \bbR^{n \times m}$ is known or estimated from state-control trajectories \citep{harrison_meta_learning, thai_2022_learn_disturbance}. We do not assume any known error bounds or distribution for the parameter $\bfxi \in \Xi \subseteq \mathbb{R}^m$. Instead, we consider a finite data set of samples $\{\bfxi_i\}_{i=1}^N$ that may be used for LF synthesis. 
The uncertainty model in \eqref{eq: uncertain_closed_loop_system} captures the commonly considered additive disturbance, which in our formulation corresponds to $m=n$ and $d(\mathbf{x}) = \boldsymbol{I}_n$. The matrix $d(\bfx)$ allows specifying particular system modes affected by the disturbance $\bfxi$ depending on the state $\bfx$.


\begin{problem}[\textbf{Lyapunov Function Search For Uncertain Systems}]\label{prob: LF_Construction_CCP}
Given a finite set of uncertainty samples $\{\bfxi_i\}_{i=1}^N$ from the uncertain system in \eqref{eq: uncertain_closed_loop_system}, obtain a Lyapunov function $V: \bbR^n \mapsto \bbR$ that can be used to verify the stability of the origin while taking the uncertainty into account.
\end{problem}

\vspace*{-1ex}
\section{Lyapunov Function Search For Systems with Model Uncertainty}
\label{sec: approach_LF}


We present an SOS approach (Sec.~\ref{sec: drccp_sos_lf}) and a neural network approach (Sec.~\ref{sec: nn_lf_search}) to address Problem~\ref{prob: LF_Construction_CCP}. Our methodology is based on finding a function $V: \mathbb{R}^n \mapsto \mathbb{R}$ that satisfies the Lyapunov conditions in \eqref{eq: original_lf_condition}. The uncertainty in the dynamical system \eqref{eq: uncertain_closed_loop_system} appears in the term $\dot{V}(\bfx)$, which presents a challenge for ensuring that the condition $\dot{V}(\bfx) < 0$, $\forall \bfx \neq  \boldsymbol{0}$ is satisfied. 

\subsection{Sum-of-Squares Approach For Lyapunov Function Search}\label{sec: drccp_sos_lf}
%
%



We first introduce our SOS approach for LF synthesis under model uncertainty. The Lyapunov conditions in \eqref{eq: lf_to_sos}, taking the uncertainty in \eqref{eq: uncertain_closed_loop_system} into account, become:
\begin{equation}
\label{eq: probabilistic_lf_condition}
    V(\mathbf{0}) = 0;  \; \forall \bfx \neq \boldsymbol{0}, \; V(\bfx) - \epsilon\|\bfx\|_2^2 \geq 0 \; \text{and} \; \bbP^*(-\dot{V}(\bfx,\bfxi) - \epsilon\|\bfx\|_2^2 \geq 0  ) \geq 1-\beta,
\end{equation}
where $\bbP^*$ denotes the true distribution of $\bfxi$. 

To simplify the presentation, let 
%
    $
    G(\bfx,\bfxi) = \dot{V}(\bfx,\bfxi ) + \epsilon \|\bfx\|_2^2 = \nabla V(\bfx)^\top(f(\bfx)+d(\bfx)\bfxi) + \epsilon \|\bfx\|_2^2$, 
%
so that the chance-constraint in \eqref{eq: probabilistic_lf_condition} becomes $\bbP^* (-G(\bfx,\bfxi) \geq 0) \geq 1 - \beta $, $\forall \bfx \neq \boldsymbol{0}$. Based on the discussion in Sec.~\ref{sec: prelim_cvar_drccp}, the CVaR approximation provides a sufficient condition for enforcing the chance constraint: $\inf_{t \in \mathbb{R}}\left[\beta^{-1}\mathbb{E}_{\bbP^*}[(G(\bfx,\bfxi)+t)_{+}] - t \right] \leq 0 $, for all $\bfx \neq \boldsymbol{0}$. If the true distribution $\bbP^*$ were known, this formulation could be used to deal with the uncertainty. However, we are only provided with samples $\{\bfxi_i\}_{i=1}^N$ from $\bbP^*$. We thus rewrite the condition by multiplying by $\beta$ on both sides and using the empirical expectation to approximate the true expectation, 
\begin{equation}
\label{eq: cvar_explicit_form}
    \inf_{t \in \mathbb{R}}\left[\frac{1}{N} \sum_{i=1}^{N} (G(\bfx,\bfxi_i) + t)_+  - t \beta \right] \leq 0 , \quad \forall \bfx \neq \boldsymbol{0} .
\end{equation}
Due to the infimum term in the constraint, one cannot directly write \eqref{eq: cvar_explicit_form} as an SOS condition, as in~\eqref{eq: original_lf_sos}. The following result provides an alternative SOS condition that ensures~\eqref{eq: cvar_explicit_form} holds. 

\begin{proposition}[CC-SOS Condition]
\label{proposition: chance_constraint_relax_lf}
Assume $\beta \leq \frac{1}{N}$, the constraint in \eqref{eq: cvar_explicit_form} is equivalent to:
\begin{equation}\label{eq: proposition_1}
     \max_{i} \beta (\dot{V}(\bfx,\bfxi_i) + \epsilon \|\bfx\|_2^2) \leq 0, \quad \forall \bfx \neq \boldsymbol{0}, 
\end{equation}
Furthermore, if $f$ and $d_i$ are polynomials, the following $N$ SOS conditions are sufficient for \eqref{eq: proposition_1},
\begin{equation} \label{eq: proposition_1_sos}
     -\dot{V}(\bfx,\bfxi_i) - \epsilon\|\bfx\|_2^2 \in \operatorname{SOS}(\bfx), \quad \forall i = 1,2 \dots, N .
\end{equation}
\end{proposition}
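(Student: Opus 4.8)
The plan is to fix an arbitrary $\bfx \neq \boldsymbol{0}$ and evaluate the inner infimum over $t$ in \eqref{eq: cvar_explicit_form} in closed form, since for fixed $\bfx$ the objective depends only on the $N$ scalars $g_i := G(\bfx,\bfxi_i) = \dot{V}(\bfx,\bfxi_i)+\epsilon\|\bfx\|_2^2$. Define $h(t) := \frac{1}{N}\sum_{i=1}^N (g_i+t)_+ - t\beta$. Each map $t \mapsto (g_i+t)_+$ is convex and piecewise linear with a single kink at $t=-g_i$, so $h$ is convex and piecewise linear in $t$; I would first note that $h(t)\to +\infty$ as $t\to\pm\infty$ (the slope is $-\beta<0$ for $t$ very negative and $1-\beta>0$ for $t$ very positive, using $0<\beta<1$), so the infimum is finite and attained.

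The key step is a slope analysis. On any region where exactly $k$ of the terms satisfy $g_i+t>0$, the derivative of $h$ is $\frac{k}{N}-\beta$. Sorting the samples so that $g_{(1)} \geq \dots \geq g_{(N)}$, the breakpoints of $h$ lie at $t=-g_{(j)}$, and the slope on the interval $(-g_{(1)},-g_{(2)})$ is $\frac{1}{N}-\beta$. The hypothesis $\beta \leq \frac{1}{N}$ makes this slope non-negative, so $h$ is non-increasing for $t<-g_{(1)}$ and non-decreasing for $t>-g_{(1)}$; hence a minimizer is $t^\star = -g_{(1)} = -\max_i g_i$. Evaluating there, every $(g_i+t^\star)_+ = (g_i-\max_j g_j)_+ = 0$, so $h(t^\star) = -t^\star\beta = \beta\max_i g_i$, giving $\inf_{t} h(t) = \beta\max_i g_i$.

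With this closed form, \eqref{eq: cvar_explicit_form} reads $\beta\max_i g_i \leq 0$, which is exactly \eqref{eq: proposition_1} after substituting $g_i = \dot V(\bfx,\bfxi_i)+\epsilon\|\bfx\|_2^2$; since this holds for every $\bfx\neq\boldsymbol{0}$, the two constraints are equivalent. For the second claim, I would use that $\max_i \beta(\cdot)\leq 0$ holds iff each summand is $\leq 0$, i.e. $-\dot V(\bfx,\bfxi_i)-\epsilon\|\bfx\|_2^2 \geq 0$ for all $i$ and all $\bfx\neq\boldsymbol{0}$ (dividing by $\beta>0$). When $f$ and the $d_i$ are polynomials and $V$ is a polynomial, each $\dot V(\bfx,\bfxi_i) = \nabla V(\bfx)^\top(f(\bfx)+d(\bfx)\bfxi_i)$ is a polynomial in $\bfx$ for the fixed sample $\bfxi_i$, so each $-\dot V(\bfx,\bfxi_i)-\epsilon\|\bfx\|_2^2$ is a polynomial; membership in $\operatorname{SOS}(\bfx)$ then implies global non-negativity, which is sufficient for \eqref{eq: proposition_1}.

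I expect the main obstacle to be the slope/minimizer argument, and in particular making precise exactly where $\beta \leq \frac{1}{N}$ enters: it is what forces the optimal $t^\star$ to sit at the largest breakpoint $-\max_i g_i$ so that the CVaR surrogate collapses to a maximum over the finitely many samples. Care is needed at the boundary case $\beta = \frac{1}{N}$, where the slope $\frac{1}{N}-\beta$ vanishes and $h$ is flat on $[-g_{(1)},-g_{(2)}]$; the infimum value is unchanged, so the equivalence persists. Because the closed form yields the exact value of the infimum rather than merely a bound, the equivalence (and not just a one-sided implication) claimed in \eqref{eq: proposition_1} follows directly, whereas the passage to \eqref{eq: proposition_1_sos} is only a sufficient condition, reflecting that SOS membership is sufficient but not necessary for non-negativity.
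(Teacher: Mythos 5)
Your proof is correct and follows essentially the same route as the paper: both analyze the piecewise-linear structure of the CVaR objective in $t$, use $\beta \leq \tfrac{1}{N}$ to place the minimizer at $t^\star = -\max_i G(\bfx,\bfxi_i)$, evaluate the infimum in closed form as $\beta \max_i G(\bfx,\bfxi_i)$, and then pass to the $N$ SOS conditions via nonnegativity of SOS polynomials. Your treatment is in fact slightly more careful than the paper's (explicit coercivity of $h$ guaranteeing attainment, and the flat-interval case $\beta = \tfrac{1}{N}$), but the underlying argument is identical.
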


%
%

\begin{proof}
Denote by $t^{*}$ the value when the infimum is attained in \eqref{eq: cvar_explicit_form}. Without loss of generality, we assume that for a given $\bfx$, $G(\bfx,\bfxi_i) \geq G(\bfx,\bfxi_j)$, for all $1 \leq i < j \leq N$. Observe that for each $\bfx \neq \bf0$, the function $\frac{1}{N} \sum_{i=1}^{N} (G(\bfx,\bfxi_i) + t)_+  - t \beta$ is piecewise-linear in $t$ with $N+1$ intervals and $N$ breakpoints, given by $\{ -G(\bfx,\bfxi_i)\}_{i=1}^N$ and the slope for the $i$-th interval is $\frac{i-1}{N} - \beta$. Thus, the optimal solution is $t^{*} = -G(\bfx,\bfxi_k)$, where $k$ satisfies $\frac{k-1}{N} - \beta < 0 \;\; \text{and} \;\;  \frac{k}{N} - \beta \geq 0$. The constraint in \eqref{eq: cvar_explicit_form} can be rewritten as $\frac{1}{N} \sum_{i=1}^{k} (G(\bfx,\bfxi_i) - G(\bfx,\bfxi_k)) + \beta G(\bfx,\bfxi_k) \leq 0,\quad \forall \bfx \neq \boldsymbol{0}$. Since $\beta \leq \frac{1}{N}$, only the first interval has negative slope and this constraint can be written as \eqref{eq: proposition_1}. Inspired by the SOS formulation in \eqref{eq: original_lf_sos}, \eqref{eq: proposition_1} is implied by the $N$ SOS constraints in \eqref{eq: proposition_1_sos}.
\end{proof}


%
Using Proposition~\ref{proposition: chance_constraint_relax_lf}, we propose a chance-constrained (CC)-SOS formulation to search for a valid Lyapunov function for the uncertain system in \eqref{eq: uncertain_closed_loop_system}:
%
%
\begin{equation}
\label{eq: ccp_sos_final}
     V(\bfx) = \sum_{k=0}^{2d} c_k \bfx^k, \; c_0 = 0; \;  V(\bfx) - \epsilon\|\bfx\|_2^2 \in \text{SOS}(\bfx); \;
     -\dot{V}(\bfx,\bfxi_i) - \epsilon\|\bfx\|_2^2 \in \text{SOS}(\bfx),
\end{equation}
for all $i \in [N]$. Note that by using CVaR approximations in~\eqref{eq: cvar_explicit_form} and assuming $\beta \leq \frac{1}{N}$, the CC-SOS formulation becomes equivalent to the formulation that is robust against the provided samples $\{\bfxi_i\}_{i=1}^N$, as shown in \eqref{eq: proposition_1_sos}. This CC-SOS formulation overcomes the lack of knowledge of the true uncertainty distribution $\bbP^*$ by using the available samples $\bfxi_i$ to conservatively approximate the probabilistic constraint in \eqref{eq: probabilistic_lf_condition} with $N$ SOS conditions. Nonetheless, the test-time validity of a Lyapunov function satisfying \eqref{eq: ccp_sos_final} is not guaranteed because the CC-SOS condition does not account for the error between the empirical $\hat{\mathbb{P}}_N$ and the true $\mathbb{P}^*$ distributions. Moreover, the distribution $\bbP^*$ that generates the uncertainty samples may change at deployment time. This motivates the following distributionally robust chance-constrained formulation:
\begin{equation}
    V(\mathbf{0}) = 0;  \;\forall \bfx \neq \boldsymbol{0}, \; V(\bfx) - \epsilon\|\bfx\|_2^2 \geq 0 \; \text{and} \;
    \inf_{\mathbb{P}\in \calM_{N}^{r}}\bbP(-\dot{V}(\bfx,\bfxi) - \epsilon\|\bfx\|_2^2 \geq 0  ) \geq 1-\beta,  \label{eq: drccp_lf_vdot}
\end{equation}
where $\calM_{N}^{r}$ denotes the Wasserstein ambiguity set around the empirical distribution $\hat{\mathbb{P}}_N$ with user-defined radius $r$. Based on the discussion in Sec.~\ref{sec: prelim_cvar_drccp}, the following constraint is a sufficient condition for the distributionally robust chance constraint in \eqref{eq: drccp_lf_vdot} to hold, 
\begin{equation}
\label{eq: supinf_drccp}
    \sup_{\mathbb{P}\in \calM_{N}^{r}}\inf_{t \in \mathbb{R}}\left[\mathbb{E}_{\mathbb{P}}[G(\bfx,\bfxi)+t)_{+}] -t \beta \right] \leq 0, \quad \forall \bfx \neq \boldsymbol{0}. 
\end{equation}
%
%
As before,~\eqref{eq: supinf_drccp} is not amenable to a SOS formulation. The following result presents SOS conditions which are sufficient to ensure that \eqref{eq: supinf_drccp} holds. 

\begin{proposition}[DRCC-SOS Condition]\label{proposition: drcc_sos}
Assume $\beta \leq \frac{1}{N}$, consider the $1$-Wasserstein distance with $L_1$ norm as the metric $d$. The following is a sufficient condition for \eqref{eq: supinf_drccp} to hold,
\begin{equation}
\label{eq: drcc_alpha_relax}
    r \max_{1 \leq j \leq m} | \nabla V(\bfx)^\top d_j(\bfx) | + \max_{i} \beta (\dot{V}(\bfx,\bfxi_i) + \epsilon \|\bfx\|_2^2) \leq 0, \quad \forall \bfx \neq \bf0,
\end{equation}
where $\nabla V(\bfx)^\top d_j(\bfx)$ denotes the $j$-th element of the row vector. If $\Xi = \bbR^m$, then \eqref{eq: drcc_alpha_relax} is equivalent to \eqref{eq: supinf_drccp}. Also, if $f$ and $d_i$ are polynomials, \eqref{eq: drcc_alpha_relax} is implied by the following SOS conditions,
\begin{align}
    \pm  r \nabla V(\bfx)^\top d_j(\bfx) - \beta(\dot{V}(\bfx,\bfxi_i) - \epsilon\|\bfx\|_2^2) \in \text{SOS}(\bfx), \; \;
    \forall i = 1,2 \dots, N, \; \; \forall j = 1,2 \dots, m. \label{eq: proposition2_sos}
\end{align}

\end{proposition}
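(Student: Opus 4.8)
The plan is to bound the sup--inf quantity in \eqref{eq: supinf_drccp} by swapping the order of optimization and then invoking the strong-duality form of worst-case expectations over Wasserstein balls. First I would apply the minimax inequality $\sup_{\bbP}\inf_{t} \leq \inf_{t}\sup_{\bbP}$, which reduces matters to evaluating, for each fixed $\bfx$ and $t$, the worst-case expectation $\sup_{\bbP\in\calM_{N}^{r}}\mathbb{E}_{\bbP}[(G(\bfx,\bfxi)+t)_{+}]$. The integrand $\bfxi\mapsto (G(\bfx,\bfxi)+t)_{+}$ is the composition of the $1$-Lipschitz map $(\cdot)_{+}$ with the affine map $\bfxi\mapsto\nabla V(\bfx)^\top d(\bfx)\bfxi + (\nabla V(\bfx)^\top f(\bfx)+\epsilon\|\bfx\|_2^2+t)$; since the Wasserstein metric $d$ is built from the $L_1$ norm, H\"older's inequality shows that its Lipschitz constant in $\bfxi$ equals the dual ($L_\infty$) norm of the coefficient vector, namely $L=\max_{1\le j\le m}|\nabla V(\bfx)^\top d_j(\bfx)|$, which is independent of $t$.

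Next I would apply the Lipschitz form of Wasserstein distributionally robust duality \citep{Esfahani2018DatadrivenDR, Hota2019DataDrivenCC}: for an $L$-Lipschitz loss $\ell$, one has $\sup_{\bbP\in\calM_{N}^{r}}\mathbb{E}_{\bbP}[\ell]\le rL + \frac{1}{N}\sum_{i=1}^{N}\ell(\bfxi_i)$, with equality exactly when the support is unconstrained, $\Xi=\bbR^m$ (for a strict subset $\Xi\subsetneq\bbR^m$ the inner supremum in the dual is taken over a smaller set, yielding only the inequality). Substituting $L=\max_{j}|\nabla V(\bfx)^\top d_j(\bfx)|$, the $rL$ term is independent of $t$ and factors out of $\inf_{t}$, and the residual $\inf_{t}[\frac{1}{N}\sum_{i}(G(\bfx,\bfxi_i)+t)_{+} - t\beta]$ is precisely the empirical CVaR expression \eqref{eq: cvar_explicit_form} analyzed in Proposition~\ref{proposition: chance_constraint_relax_lf}, which for $\beta\le 1/N$ evaluates to $\max_{i}\beta(\dot V(\bfx,\bfxi_i)+\epsilon\|\bfx\|_2^2)$. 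This reproduces exactly the left-hand side of \eqref{eq: drcc_alpha_relax}, establishing it as a sufficient condition in general. For the equivalence claim when $\Xi=\bbR^m$, the duality is an equality, so $\inf_{t}\sup_{\bbP}$ equals the left side of \eqref{eq: drcc_alpha_relax}; I would then upgrade the minimax inequality to an equality via Sion's theorem, using that the objective is convex in $t$, linear (hence concave) in $\bbP$, that $\calM_{N}^{r}$ is convex and weakly compact, and that coercivity in $t$ (the objective tends to $+\infty$ as $t\to\pm\infty$ since $\beta\in(0,1)$) permits restriction to a compact interval.

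Finally, for the SOS reformulation I would exploit that, because $\beta>0$, the sum-of-two-maxima condition \eqref{eq: drcc_alpha_relax} holds if and only if it holds termwise over all index pairs, i.e. $r|\nabla V(\bfx)^\top d_j(\bfx)| + \beta(\dot V(\bfx,\bfxi_i)+\epsilon\|\bfx\|_2^2)\le 0$ for every $i\in[N]$ and $j\in[m]$: the maximizing pair dominates all others, and conversely any pairwise violation would be inherited by the maxima. Writing each absolute value as $|a|=\max(a,-a)$ splits this into the two polynomial nonnegativity requirements $\pm\, r\nabla V(\bfx)^\top d_j(\bfx) - \beta(\dot V(\bfx,\bfxi_i)+\epsilon\|\bfx\|_2^2)\ge 0$, which, when $f$ and $d_i$ are polynomials, are enforced (sufficiently, since SOS membership implies global nonnegativity) by the SOS conditions \eqref{eq: proposition2_sos}.

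The step I expect to be the main obstacle is the second one: correctly invoking the Wasserstein strong-duality theorem, in particular pairing the $L_1$ ground metric with its $L_\infty$ dual norm to obtain the Lipschitz constant, and tracking carefully the point at which the support assumption $\Xi=\bbR^m$ upgrades the duality bound from an inequality to an equality, thereby promoting the overall statement from ``sufficient'' to ``equivalent.'' The minimax interchange required for the equivalence direction is a secondary technical point, handled by the coercivity and weak-compactness argument noted above.
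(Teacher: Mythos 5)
Your argument follows essentially the same route as the paper's proof: both rest on the Wasserstein distributionally-robust bound that turns \eqref{eq: supinf_drccp} into the empirical CVaR term plus the regularizer $r L_G(\bfx)$, compute $L_G(\bfx) = \max_{1\le j\le m}|\nabla V(\bfx)^\top d_j(\bfx)|$ as the $L_\infty$ dual norm of the affine map's coefficients, invoke Proposition~\ref{proposition: chance_constraint_relax_lf} to evaluate the CVaR piece as $\max_i \beta(\dot V(\bfx,\bfxi_i)+\epsilon\|\bfx\|_2^2)$, and finish by splitting the max-plus-max inequality termwise into $\pm$ pairs. The only mechanical difference is that the paper cites \cite[Lemma V.8]{Hota2019DataDrivenCC} and \cite[Theorem 6.3]{Esfahani2018DatadrivenDR} to obtain the sample-average-plus-regularizer form (and its exactness when $\Xi=\bbR^m$) in one step, whereas you reconstruct it by hand: minimax inequality, fixed-$t$ Lipschitz duality, and Sion's theorem with a coercivity/weak-compactness argument to recover the equivalence direction. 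That reconstruction is sound and more self-contained than the paper's citation, so this is a difference in packaging rather than in substance.

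There is, however, one point you glossed over that deserves a flag rather than a silent identification. Your derivation correctly produces the termwise nonnegativity requirement $\pm\, r\nabla V(\bfx)^\top d_j(\bfx) - \beta\bigl(\dot V(\bfx,\bfxi_i)+\epsilon\|\bfx\|_2^2\bigr)\ge 0$, but the conditions \eqref{eq: proposition2_sos} as printed read $\pm\, r\nabla V(\bfx)^\top d_j(\bfx) - \beta\bigl(\dot V(\bfx,\bfxi_i)-\epsilon\|\bfx\|_2^2\bigr)\in\text{SOS}(\bfx)$, with the opposite sign on the $\epsilon$ term. These are not the same: SOS membership of the printed expression only yields $r|\nabla V(\bfx)^\top d_j(\bfx)| + \beta\bigl(\dot V(\bfx,\bfxi_i)-\epsilon\|\bfx\|_2^2\bigr)\le 0$, which is weaker than \eqref{eq: drcc_alpha_relax} by $2\beta\epsilon\|\bfx\|_2^2$ and hence does \emph{not} imply it, so your closing sentence ("which \ldots are enforced by the SOS conditions \eqref{eq: proposition2_sos}") is literally false for the equation as stated. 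The resolution is that \eqref{eq: proposition2_sos} contains a sign typo---compare with the analogous condition \eqref{eq: proposition_1_sos} in Proposition~\ref{proposition: chance_constraint_relax_lf}, which correctly reads $-\dot V(\bfx,\bfxi_i)-\epsilon\|\bfx\|_2^2\in\text{SOS}(\bfx)$---and your derived version with $+\epsilon\|\bfx\|_2^2$ is the correct one. Your blind derivation in fact exposes this error, which the paper's own one-line relaxation step also papers over; the fix to your write-up is simply to state the discrepancy explicitly instead of asserting that the two expressions coincide.
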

\begin{proof}
Based on \cite[Lemma V.8]{Hota2019DataDrivenCC} and \cite[Theorem 6.3]{Esfahani2018DatadrivenDR}, the supremum over the Wasserstein ambiguity set, i.e., condition \eqref{eq: supinf_drccp}, can be written conservatively as the sample average $\inf_{t \in \mathbb{R}}\left[\frac{1}{N} \sum_{i=1}^{N} (G(\bfx, \bfxi_i) + t)_+ - t \beta\right]$ and a regularization term $r L_G(\bfx)$, where $L_G(\bfx): \calX \mapsto \bbR_{>0}$ is the Lipschitz constant of $G(\bfx, \bfxi)$ in $\bfxi$. If $\Xi = \bbR^m$, then \eqref{eq: supinf_drccp} is equivalent to the sample average plus $r L_G(\bfx)$. Since the Lipschitz constant of a differentiable affine function equals the dual norm of its gradient, 
%
and the dual norm of the $L_1$ norm is the $L_{\infty}$ norm, we can define the convex function $L_G: \calX \mapsto \mathbb{R}_{>0}$ as
%
    $L_G(\bfx) = \|\nabla V(\bfx)^\top d(\bfx)\|_{\infty} = \max_{1 \leq j \leq m} | \nabla V(\bfx)^\top d_j(\bfx) |,$
%
which satisfies the property that $\bfxi \mapsto G (\bfx, \bfxi)$ is Lipschitz in $\bfxi$ with Lipschitz constant $L_G(\bfx)$. 
With the assumption that $\beta \leq \frac{1}{N}$, we use Proposition~\ref{proposition: chance_constraint_relax_lf} and conclude that \eqref{eq: drcc_alpha_relax} is a sufficient condition for \eqref{eq: supinf_drccp} and they are equivalent if $\Xi = \bbR^m$. Finally, inspired by the SOS relaxations of \eqref{eq: original_lf_condition} to \eqref{eq: original_lf_sos}, we can relax \eqref{eq: drcc_alpha_relax} to the $2Nm$ SOS constraints in~\eqref{eq: proposition2_sos}.
\end{proof}

Based on Proposition~\ref{proposition: drcc_sos}, we propose a DRCC-SOS formulation to find a Lyapunov function,
\begin{align}
\label{eq: drccp_sos_final}
    &V(\bfx) = \sum_{k=0}^{2d} c_k \bfx^k, \; c_0 = 0; \; \; V(\bfx) - \epsilon\|\bfx\|_2^2 \in \text{SOS}(\bfx); \\
    & \pm  r [\nabla V(\bfx)]^\top d_j(\bfx) - \beta(\dot{V}(\bfx,\bfxi_i) - \epsilon\|\bfx\|_2^2) \in \text{SOS}(\bfx), \; \;
    \forall i = 1,2 \dots, N, \; \; \forall j = 1,2 \dots, m. \notag
\end{align}
The next result identifies conditions under which the resulting Lyapunov function solves Problem~\ref{prob: LF_Construction_CCP}.

\begin{proposition}[Stability guarantee of DRCC-SOS formulation] 
\label{proposition: drcc_sos_guarantee}
Let the distribution $\bbP^*$ of $\bfxi$ in \eqref{eq: uncertain_closed_loop_system} be light-tailed, i.e., there exists an exponent $\rho$ such that $C := \mathbb{E}_{\bbP^*}[\exp(\| \bfxi \|^{\rho})] 
    < \infty$ .
Let the Wasserstein radius $r^*$ be given by:
\begin{equation} 
\label{r: estimate}
r^*_N(\alpha) := \begin{cases}
			\left( \frac{\log(c_1\alpha^{-1})}{c_2N} \right)^{1/\max(m,2)}, & N \geq \frac{\log(c_1\alpha^{-1})}{c_2}, \\
			\left( \frac{\log(c_1\alpha^{-1})}{c_2N} \right)^{1/\rho}, & N < \frac{\log(c_1\alpha^{-1})}{c_2},
			\end{cases}
\end{equation}
for $N\geq1$, $m \neq 2$, and $\alpha \in (0,1)$ being a user-specified risk parameter. The constants $c_1, c_2$ are positive and only depend on $\rho$, $C$ and $m$. Under those conditions, the Lyapunov function obtained from the DRCC-SOS formulation~\eqref{eq: drccp_sos_final} satisfies $\bbP^*(-\dot{V}(\bfx,\bfxi) - \epsilon\|\bfx\|_2^2 \geq 0) \geq (1-\alpha)(1-\beta)$.
\end{proposition}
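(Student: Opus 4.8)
The plan is to combine a measure-concentration guarantee for the empirical distribution with the distributionally robust constraint already enforced by the DRCC-SOS formulation. The key observation is that the radius $r^*_N(\alpha)$ in \eqref{r: estimate} is precisely the Wasserstein ball radius for which the true distribution $\bbP^*$ is captured inside the ambiguity set $\calM_N^{r^*}$ with confidence $1-\alpha$. Once $\bbP^*$ is known to lie in $\calM_N^{r^*}$, the infimum over the ambiguity set in \eqref{eq: drccp_lf_vdot} can be specialized to $\bbP^*$ itself, yielding the desired chance constraint at level $1-\beta$. The proof thus reduces to two ingredients glued together by the tower property of expectation.

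First I would invoke the Wasserstein measure-concentration result of \citet{Esfahani2018DatadrivenDR}, which rests on the Fournier--Guillin concentration inequality. Under the light-tailed assumption $\bbE_{\bbP^*}[\exp(\|\bfxi\|^{\rho})] < \infty$, this result asserts that, for the $1$-Wasserstein distance and the specific radius $r^*_N(\alpha)$ in \eqref{r: estimate}, the sampling event satisfies
\[
\bbP^N\big\{ W_1(\hat{\bbP}_N, \bbP^*) \leq r^*_N(\alpha)\big\} \geq 1 - \alpha,
\]
where $\bbP^N$ is the product measure governing the $N$ offline samples. Equivalently, $\bbP^* \in \calM_N^{r^*}$ with probability at least $1-\alpha$ over the draw of $\{\bfxi_i\}_{i=1}^N$; this is exactly why the two branches of \eqref{r: estimate}, with exponents $1/\max(m,2)$ and $1/\rho$, appear, as they are inherited verbatim from that concentration statement.

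Next I would use Proposition~\ref{proposition: drcc_sos}: any $V$ produced by \eqref{eq: drccp_sos_final} satisfies the distributionally robust chance constraint \eqref{eq: drccp_lf_vdot}, namely $\inf_{\bbP \in \calM_N^{r^*}} \bbP(-\dot V(\bfx,\bfxi) - \epsilon\|\bfx\|_2^2 \geq 0) \geq 1-\beta$. On the sampling event that $\bbP^* \in \calM_N^{r^*}$, the infimum ranges over a set containing $\bbP^*$, so in particular $\bbP^*(-\dot V(\bfx,\bfxi) - \epsilon\|\bfx\|_2^2 \geq 0) \geq 1-\beta$. Writing $A$ for the sampling event $\{\bbP^* \in \calM_N^{r^*}\}$ and $B$ for the stability event $\{-\dot V(\bfx,\bfxi) - \epsilon\|\bfx\|_2^2 \geq 0\}$, and noting that the certificate $V$ (hence the function $\bbP^*(B)$) depends only on the samples, the claim follows by conditioning:
\[
\bbP^N \otimes \bbP^*(B) = \bbE_{\{\bfxi_i\}}\big[\bbP^*(B)\big] \geq \bbE_{\{\bfxi_i\}}\big[\bbP^*(B)\,\mathbf{1}_A\big] \geq (1-\beta)\,\bbP^N(A) \geq (1-\alpha)(1-\beta).
\]

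The main obstacle is interpretational rather than analytical: one must read the claimed bound as a joint guarantee over both the offline sampling that fixes $V$ and the online disturbance realization $\bfxi \sim \bbP^*$, since $\bbP^*(-\dot V - \epsilon\|\bfx\|_2^2 \geq 0)$ is itself a sample-dependent random quantity. The product $(1-\alpha)(1-\beta)$ then emerges cleanly because the two sources of randomness factor. The only technical care needed is to confirm that the hypotheses underlying the concentration result, i.e. light tails, the $1$-Wasserstein metric, and the $m \neq 2$ case, match the radius formula \eqref{r: estimate} exactly so that no additional constants or side conditions are introduced; the constants $c_1, c_2$ depending only on $\rho$, $C$, and $m$ are precisely those furnished by that theorem.
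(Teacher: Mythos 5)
Your proposal is correct and takes essentially the same route as the paper's proof: invoke the Wasserstein measure-concentration result of \cite[Theorem 3.4]{Esfahani2018DatadrivenDR} to conclude that $\bbP^*$ lies in $\calM_N^{r^*}$ with confidence $1-\alpha$, apply Proposition~\ref{proposition: drcc_sos} to get the $1-\beta$ bound on that event, and multiply the two confidence levels. If anything, your explicit handling of the two sources of randomness---writing the guarantee under the product measure $\bbP^N \otimes \bbP^*$ and noting that the event $A$ concerns the offline samples while $B$ concerns the online realization---is more careful than the paper's argument, which states both events under the single measure $\bbP^*$.
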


\begin{proof}
For each $\bfx$, consider the events $A := \{\bbP^* \in \calM_N^{r^*}\}$ and $B := \{-\dot{V}(\bfx,\bfxi) - \epsilon\|\bfx\|_2^2 \geq 0\}$.

On the one hand, we have from \cite[Theorem 3.4]{Esfahani2018DatadrivenDR} that, under~\eqref{r: estimate}, $\bbP^*(A) \geq 1- \alpha$. 
On the other, from Proposition~\ref{proposition: drcc_sos}, the LF resulting from~\eqref{eq: drccp_sos_final} with $r^*$ satisfies $\inf_{\mathbb{P}\in \calM_{N}^{r^*}}\bbP(B) \geq 1-\beta$. Now, consider the probability of the event $B$ under the true distribution $\bbP^*$:
\begin{equation}
\bbP^*(B) \geq \bbP^*(B \cap A) = \bbP^*(B | A)\bbP^*(A) \geq \left(\inf_{\bbP \in \calM_{N}^{r_N(\alpha)}}\bbP(B)\right)\bbP^*(A) \geq (1-\alpha)(1-\beta) 
\end{equation}
\end{proof}

The DRCC-SOS formulation~\eqref{eq: drccp_sos_final} provides a stability guarantee (Proposition~\ref{proposition: drcc_sos_guarantee}) if there is no uncertainty distributional shift, i.e., $\bbP^*$ does not shift outside of $\calM_{N}^{r^*}$ at deployment time. However, similar to other SOS approaches, the formulation is restricted to polynomial systems and the non-existence of an SOS LF does not imply the non-existence of other valid LFs. This motivates us to consider next a more general candidate LF candidate, represented as a neural network.

\vspace*{-1ex}
\subsection{Neural Network Approach For Lyapunov Function Search}
\label{sec: nn_lf_search}

%
%
%
%

We propose a neural network approach that encourages the satisfaction of Lyapunov conditions by minimizing a loss function that quantifies their violation. 
Consider a neural network Lyapunov function (NN-LF) representation of the form $V_{\bftheta}(\bfx):= \| \phi_{\bftheta}(\bfx) -  \phi_{\bftheta}(\boldsymbol{0}) \|^2 + \hat{\alpha} \| \bfx \|^2$, where $\phi_{\bftheta}: \bbR^m \mapsto \bbR$ is a fully-connected neural network with parameters $\bftheta$ and $\tanh$ activations, and $\hat{\alpha}$ is a user-chosen parameter \citep{gaby2021lyapunov_net}. By construction, this function is positive definite and $V_{\bftheta}(\boldsymbol{0}) = 0$. We obtain a training set $\calD_{\text{LF}} := \{ \bfx_i\}_{i=1}^M$ by sampling uniformly from the domain of interest $\calX_{\delta}$ and then minimize the following empirical loss function:
\begin{equation}
\label{eq: lf_loss}
    \ell_{\text{LF}}(\bftheta) = \frac{1}{M}\sum_{i=1}^M (\dot{V}_{\bftheta}(\bfx_i) + \gamma \|\bfx_i\|)_+,
\end{equation}
%
where $\gamma$ is user-defined. This loss encourages a decrease of $V_{\bftheta}$ along the system trajectories.
%
%
To deal with the model uncertainty in \eqref{eq: uncertain_closed_loop_system}, we develop chance-constrained (CC) NN-LF  and distributionally robust chance-constrained (DRCC) NN-LF formulations. In both cases, we also have the offline uncertainty training set $\calD_{\xi} := \{ \bfxi_i\}_{i=1}^N$. For the CC-NN-LF formulation, we require
\begin{equation}\label{eq: cc_nn_lf_condition}
    \bbP^*(\dot{V}_{\bftheta}(\bfx, \bfxi) + \gamma \|\bfx \| \leq 0) \geq 1 - \beta,\quad \forall \bfx \in \calX_{\delta}.
\end{equation}
However, we are only given samples $\calD_{\xi}$ from $\bbP^*$. Assuming $\beta \leq \frac{1}{N}$, similarly to~Proposition~\ref{proposition: chance_constraint_relax_lf}, we approximate \eqref{eq: cc_nn_lf_condition} conservatively as, $\forall \bfx_i \in \calD_{\text{LF}}$, $\max_{j}(\dot{V}_{\bftheta}(\bfx_i, \bfxi_j) + \gamma \|\bfx_i \|) \leq 0$.
Thus, to aim for the satisfaction of \eqref{eq: cc_nn_lf_condition} for the training set $\calD_{\text{LF}}$, we construct the loss function,
\begin{equation}\label{eq: cc_lf_loss}
    \ell_{\text{CC-LF}}(\bftheta) = \frac{1}{M}\sum_{i=1}^M ( \max_{j}(\dot{V}_{\bftheta}(\bfx_i, \bfxi_j) + \gamma \|\bfx_i \|))_+.
\end{equation}
For the DRCC-NN-LF formulation, to account for errors between the empirical distribution $\hat{\bbP}_N$ and the true distribution $\bbP^*$ as well as possible distribution shift during deployment, we require: 
\begin{equation}\label{eq: drcc_nn_lf_condition}
    \inf_{\bbP \in \calM_{N}^{r}}\mathbb{P}(\dot{V}_{\bftheta}(\bfx, \bfxi) + \gamma \|\bfx\| \leq 0) \geq 1 - \beta, \quad \forall \bfx \in \calX_{\delta} .
\end{equation}
Note that \eqref{eq: drcc_nn_lf_condition} can be tightened in terms of the CVaR approximation as:
\begin{equation}
    \sup_{\bbP \in \calM_{N}^{r}} \inf_{t \in \mathbb{R}}\big[\bbE_{\bbP} (\dot{V}_{\bftheta}(\bfx,\bfxi) + \gamma \|\bfx\| +t)_{+} -t\beta \big] \leq 0, \; \; \forall \bfx \in \calX_{\delta}. 
\end{equation}
Next, using the uncertainty set $\calD_{\xi}$ and
the training dataset $\calD_{\text{LF}}$ and assuming $\beta \leq \frac{1}{N}$, similarly to Proposition.~\ref{proposition: drcc_sos}, we rewrite the inequality conservatively as (equivalently if $\Xi = \bbR^m$),
     $\forall \bfx_i \in \calD_{\text{LF}}$, $r \|\nabla V(\bfx_i)^\top d(\bfx_i)\|_{\infty} + \beta \max_{j}(\dot{V}_{\bftheta}(\bfx_i, \bfxi_j)) + \gamma \|\bfx_i\| \leq 0$.
Thus, we design the following empirical loss function for the DRCC-NN-LF formulation,
\begin{equation}\label{eq: dr_lf_loss}
    \ell_{\text{DRCC-LF}}(\bftheta) = \frac{1}{M}\sum_{i=1}^M (r \|\nabla V(\bfx_i)^\top d(\bfx_i)\|_{\infty} + \beta \max_{j}(\dot{V}_{\bftheta}(\bfx_i, \bfxi_j)) + \gamma \|\bfx_i \|)_+.
\end{equation}
The neural network approach, with the novel loss function designs in \eqref{eq: cc_lf_loss} and \eqref{eq: dr_lf_loss}, overcomes the issues noted above for the SOS approach. In particular, we do not require the dynamics to be described by polynomials and avoid scalability problems.

\begin{table}[t]
	\centering
	\caption{Comparison of Cases 1 and 2 under different online true distributions. Here, ``vio. rate'' denotes violation rate: (validations with $\dot{V} > 0$)/(total validations), and ``vio. area'' denotes average violation area over all simulations: (data points with $\dot{V} > 0$)/(total data points).
    5000 realizations of the online true uncertainty $\bfxi^*$ are sampled from uniform and Gaussian distributions: $\bfxi^* \sim [\calU(1,4),\calU(1,2)]^{\top}$ and $\bfxi^* \sim [\calN(4,1.5),\calN(1,1.5)]^{\top}$ for Case 1, $\bfxi^* \sim [\calU(5,7),\calU(-1,1)]^{\top}$ and $\bfxi^* \sim [\calN(7,1),\calN(1,1)]^{\top}$ for~Case~2.
    }
	\scalebox{.92}{
    \begin{tabular}{ccccccccc}
		\toprule 
        \multirow{ 2}{*}{Formulations} & \multicolumn{ 2}{c}{Case 1 Uniform} & \multicolumn{ 2}{c}{Case 1 Gaussian} & \multicolumn{ 2}{c}{Case 2 Uniform} & \multicolumn{ 2}{c}{Case 2 Gaussian} \\
         & vio. rate & vio. area & vio. rate & vio. area & vio. rate & vio. area & vio. rate & vio. area \\
		\midrule  
		SOS & 14.28\% & 0.94\% & 12.14\% & 1.53\% & 100\% & 15.52\% & 100\% & 18.55\% \\
        CC-SOS & 11.78\% & 0.89\% & 8.30\% & 1.24\% & 0.00\% & 0.00\% & 5.10\% & 0.04\% \\
        DRCC-SOS & 0.02\% & 0.00\% & 5.24\% & 0.80\% & 0.00\% & 0.00\% & 1.64\% & 0.01\% \\
	 NN & 31.80\% &           1.95\% & 16.66\% &      1.65\% &       100\% & 17.10\% & 100\% &     19.53\% \\
        CC-NN & 1.82\% & 0.01\% & 6.24\% & 0.72\% & 0.00\% & 0.00\% & 1.26\% & 0.01\% \\
         DRCC-NN & 0.00\% & 0.00\% & 3.22\% & 0.38\% & 0.00\% & 0.00\% & 0.72\% & 0.00\% \\
		\bottomrule
	\end{tabular}}
\label{table: numerical_1}
\vspace*{-1ex}
\end{table}

\vspace*{-1ex}
\section{Evaluation}
\label{sec: evaluation}

We apply the SOS approach (Sec.~\ref{sec: drccp_sos_lf}) and the neural network approach
(Sec.~\ref{sec: nn_lf_search}) to synthesize LFs for a polynomial system and a pendulum system under model uncertainty.

\setcounter{subfigure}{0}
\begin{figure}[t]
	\centering
    \subfigure[\scriptsize Original SOS Search]{\includegraphics[width=.3\linewidth]{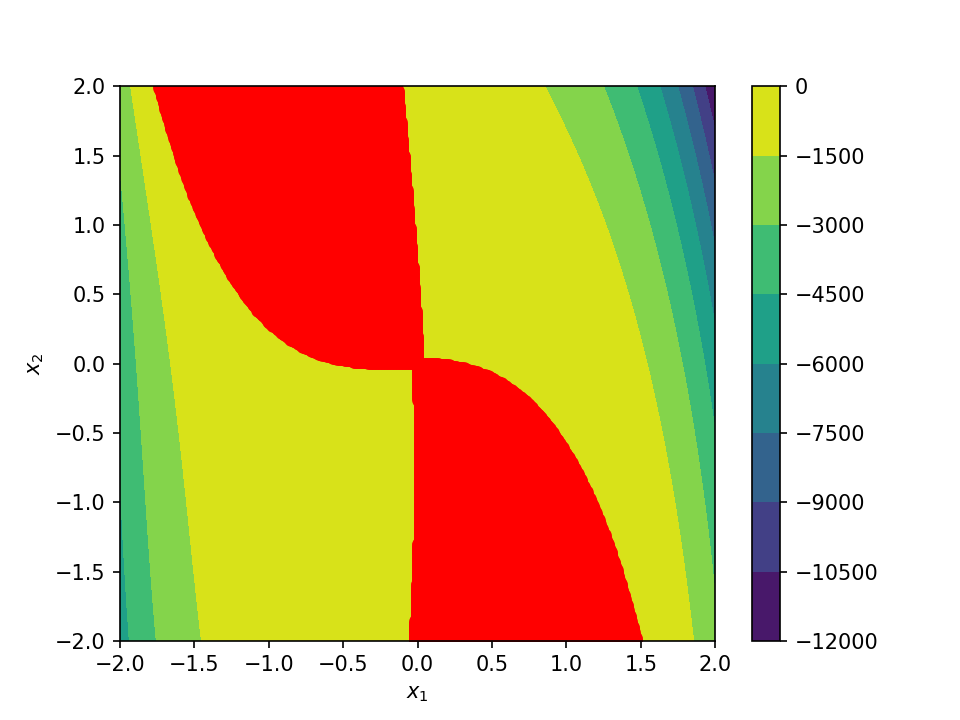}\label{fig: case_2_sos}}
	\subfigure[\scriptsize CC-SOS Search ]{\includegraphics[width=.3\linewidth]{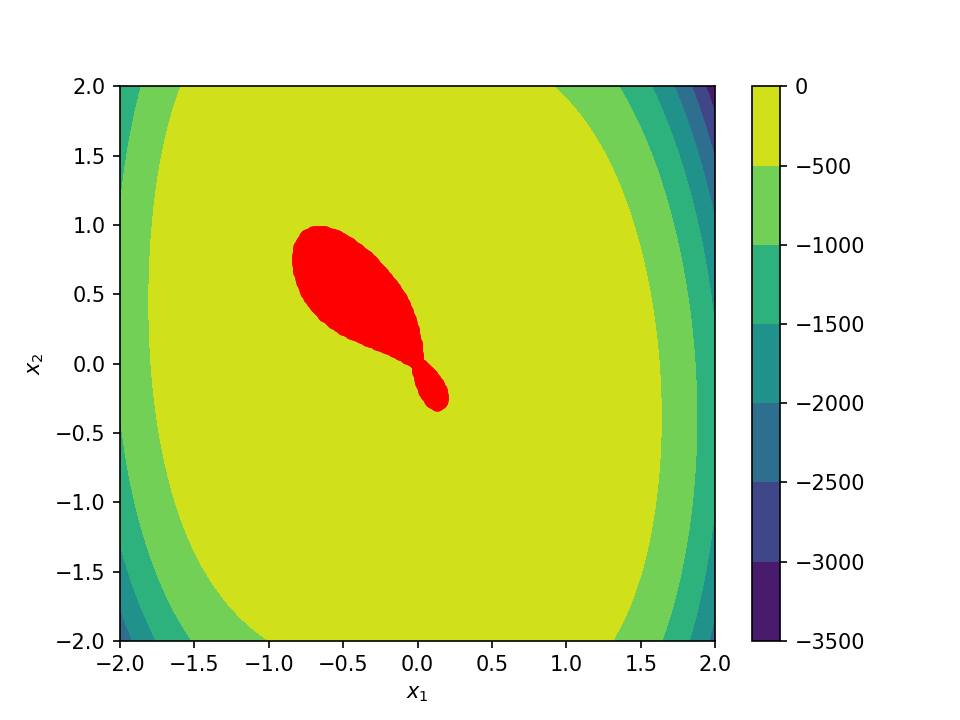}\label{fig: case_2_ccp}}
	\subfigure[\scriptsize DRCC-SOS Search ]{\includegraphics[width=.3\linewidth]{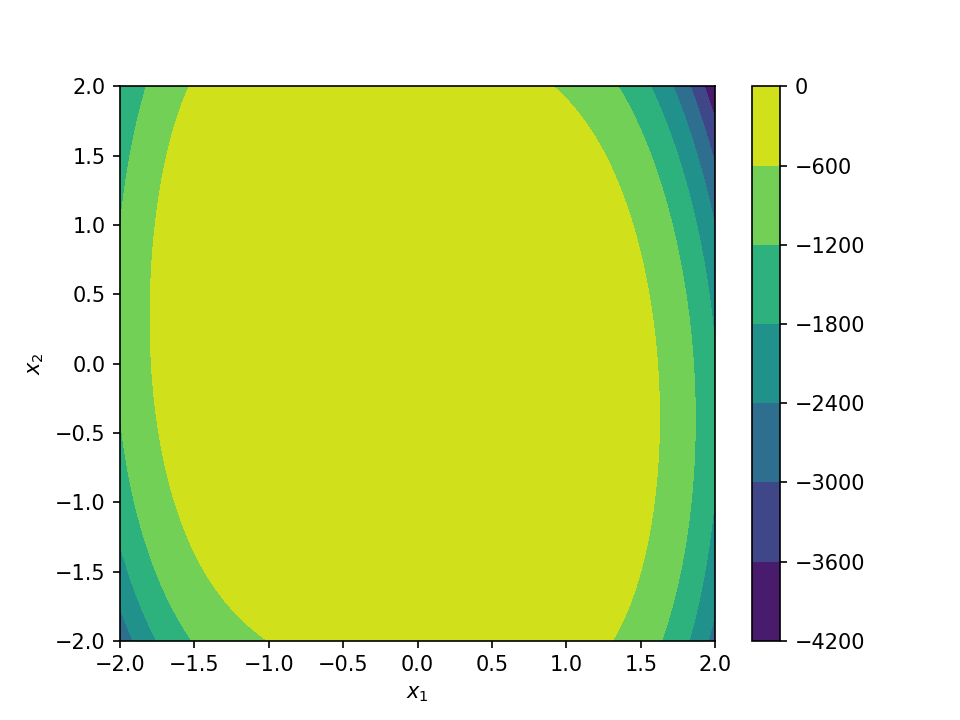}\label{fig: case_2_drccp}}
    \\[-1ex]
    \subfigure[\scriptsize Original NN Search]{\includegraphics[width=.3\linewidth]{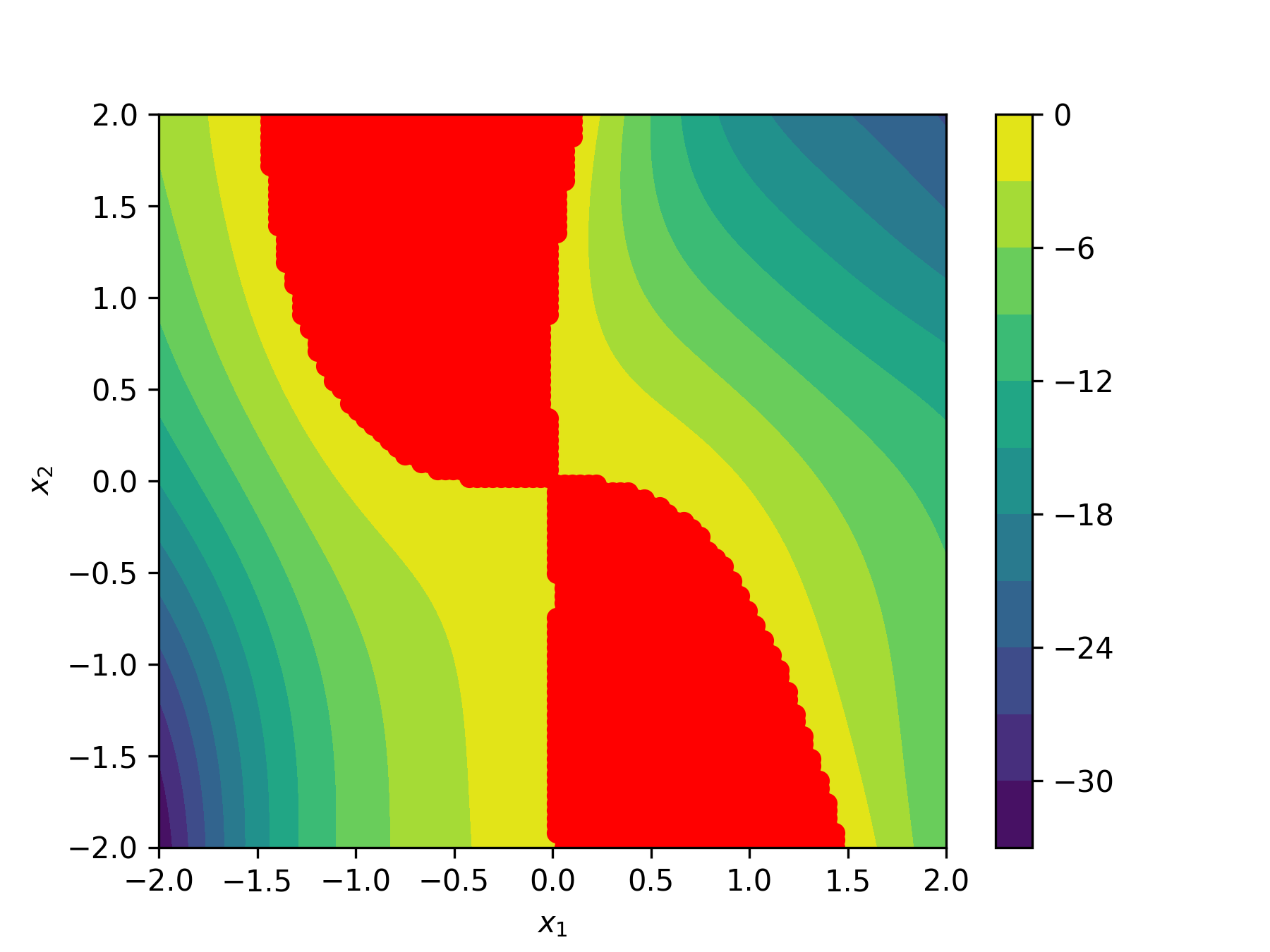}\label{fig: case_2_nn}}
    \subfigure[\scriptsize CC-NN Search ]{\includegraphics[width=.3\linewidth]{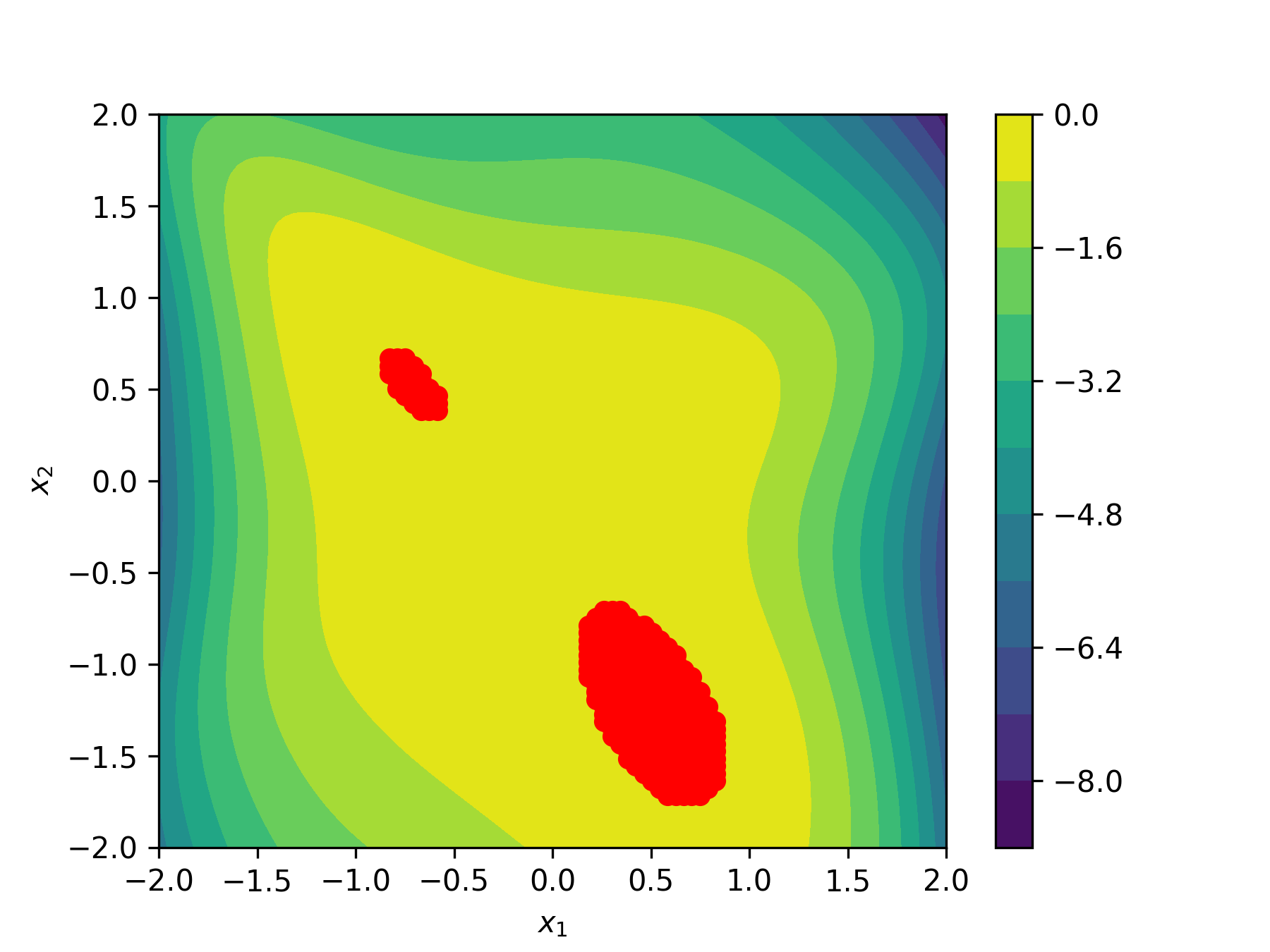}\label{fig: case_2_cc_nn}}
    \subfigure[\scriptsize DRCC-NN Search ]{\includegraphics[width=.3\linewidth]{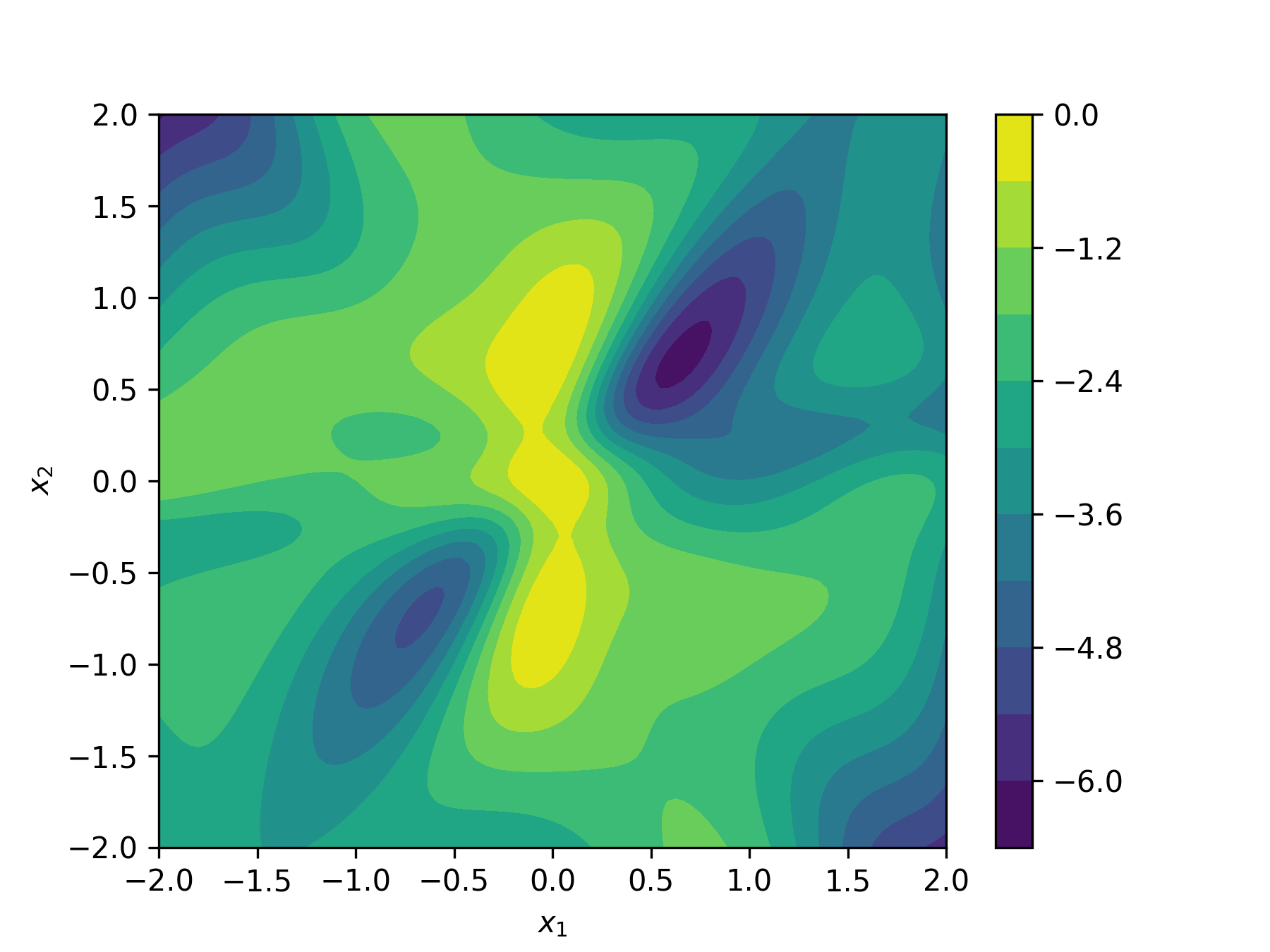}\label{fig: case_2_drcc_nn}}
	\caption{Results from SOS and NN formulations to design LF certificates for the polynomial system with Case 2 perturbations and online uncertainty $\bfxi^* = [1.9, 3.0]^\top$. The plots display the value of $\dot{V}$ over the domain, where the red areas indicate positive values (violation of the LF derivative requirements).}
	\label{fig: lf_all_compare}
\end{figure}

\vspace{1ex}
\noindent
\textbf{Third-degree Polynomial System:}
Consider a two-dimensional polynomial system \citep{mit_slides}:
%
\begin{equation}\label{eq: exp_1_dynamic}
    \left[ \begin{matrix}
    \dot{x}_1 \\
    \dot{x}_2 
    \end{matrix} \right]
	=	
	\left[ \begin{matrix}
			-\frac{1}{2}x_1^3 - \frac{3}{2}x_1^2 - x_2  \\
			6x_1 - x_2
			\end{matrix} \right] + \sum_{i = 1}^2 d_i(\bfx)\xi_i,
\end{equation}
%
with two cases for the model uncertainty:
\begin{itemize}[nosep,leftmargin=2em]
    \item Case 1: $r = 0.25, \ d_1(\bfx) = -[x_1,x_2]^{\top}\!\!, \ d_2(\bfx) = -[x_2,0]^{\top}\!\!, \ \bfxi \sim [\calN(5,1),\calN(3,1)]^{\top}$.
    \item Case 2: $r = 0.15,  \ d_1(\bfx) = -[(x_1^3+x_2),x_2]^{\top}\!\!, \ d_2(\bfx) = -[x_2,x_1]^{\top}\!\!, \ \bfxi \sim [\calN(6,1),\calN(0,1)]^{\top}$. 
\end{itemize}
Suppose that $9$ samples $\{ \bfxi_i \}_{i = 1}^{9}$ are available offline and set the confidence level to $\beta = 0.1$.

We compare the SOS search results with polynomial degree of $4$ for the original SOS formulation in \eqref{eq: original_lf_sos}, the CC-SOS formulation in \eqref{eq: ccp_sos_final}, and the DRCC-SOS formulation in \eqref{eq: drccp_sos_final}. We also include results from the NN formulation in \eqref{eq: lf_loss}, the CC-NN formulation in \eqref{eq: cc_lf_loss}, and the DRCC-NN formulation in \eqref{eq: dr_lf_loss}. 
For the neural network approach discussed in Sec.~\ref{sec: nn_lf_search}, we parametrize $V_{\bftheta}(\bfx) = | \phi_{\bftheta}(\bfx) -  \phi_{\bftheta}(\bf0) | + \hat{\alpha} \| \bfx \|$, where $\phi_{\bftheta}(\bfx)$ is a fully connected three-layer neural network with 2-D input, two 16-D hidden layers, and 1-D output, 
with $\tanh$ activations. We train the network with the ADAM optimizer \citep{Adam} with learning rate $0.005$ and Xavier initializer, and set the parameter $\hat{\alpha} = 0.05$.

We report qualitative results in Fig.~\ref{fig: lf_all_compare} for Case 2 with online uncertainty $\bfxi^* = [1.9, 3.0]^\top$. We uniformly sample $\{\bfx_i\}_{i=1}^{5000}$ states in the region $x_1, x_2 \in [-2,2]$. For the first-column plots, the resulting LFs from the baseline SOS and NN formulation fail to satisfy the Lyapunov condition for uncertain systems of the form \eqref{eq: exp_1_dynamic}, and the violation area is large since neither formulation takes uncertainty into account. For the second-column plots, the resulting LF from the CC-SOS or CC-NN formulation is less sensitive to uncertainty, since both take offline uncertainty samples into account. However, the resulting $V$ still fails to satisfy the Lyapunov condition for \eqref{eq: exp_1_dynamic}. The LF resulting from our DRCC-SOS and DRCC-NN formulations in the last column satisfies the Lyapunov conditions for \eqref{eq: exp_1_dynamic}, even with out-of-distribution uncertainty. 
Table~\ref{table: numerical_1} shows quantitative results. We report the violation rate and average violation area for each of the $6$ formulations: in all cases, the DRCC formulations outperform the CC and baseline formulations (no uncertainty considered) using either the SOS or the neural network approach in terms of violation rate and mean violation area.

\vspace{1ex}
\noindent
\textbf{Pendulum:}
Consider a pendulum with angle $\theta$ and angular velocity $\dot{\theta}$ following dynamics:
%
%
%
\begin{equation}\label{eq: pendulum_dynamics}
    \left[ \begin{matrix}
    \dot{\theta} \\
    \ddot{\theta}
    \end{matrix} \right]
	= \left[ \begin{matrix}
    \dot{\theta} \\
    \frac{-mgl \sin{\theta} - b \dot{\theta}}{ml^2}
    \end{matrix} \right] + \left[ \begin{matrix}
    0 & 0 \\
    -\frac{0.05b\dot{\theta}}{ml^2} & -\frac{0.05mgl\sin{\theta}}{ml^2}
    \end{matrix} \right] \bfxi,
\end{equation}
where $g = 9.81$ is the gravity acceleration, $m = 1.0$ is the ball mass, $l = 0.5$ is the length, $b = 0.1$ is the damping, and $d(\bfx) = [d_1(\bfx), d_2(\bfx)]$ is the perturbation matrix with $d_1$ and $d_2$ representing perturbations in damping and length, respectively.
%
%
%
%
We use $3$ offline uncertainty samples $\{ \bfxi_i \}_{i = 1}^{3}$ with $\bfxi_i \sim [\calN(0,1),\calN(0,1)]^{\top}$, and set the confidence level $\beta = 0.1$. The SOS search polynomial is set to have a degree $4$ with Wasserstein radius $r = 0.03$. The neural network $\phi_{\bftheta}$ consists of a fully connected four-layer architecture, featuring a 3-D input, three 64-D hidden layers, and a 2-D output. The network employs $\tanh$ activations, and the pendulum state $\theta$ is rewritten as two states, $\sin{\theta}$ and $\cos{\theta}$. The Wasserstein radius is set to $r = 0.12$. We train the network with the ADAM optimizer with learning rate $0.002$ and Xavier initializer, and set the parameter $\hat{\alpha} = 0.5$.

We compare the qualitative results between the SOS-based approaches and the NN-based approaches in Fig.~\ref{fig: lf_compare_pendulum} with the online uncertainty  $\bfxi^* = [-3.6,1.4]^{\top}$. Similar to Fig.~\ref{fig: lf_all_compare}, only the DRCC-SOS and DRCC-NN formulations meet the Lyapunov conditions within the domain of interest. The derivative violations observed near the small neighborhood of the equilibrium in the DRCC-NN formulation are a common issue in neural network-based Lyapunov functions, as reported in previous studies~\citep{gaby2021lyapunov_net, Chang2019NeuralLC}.

\begin{figure}[t]
	\centering
    \subfigure[\scriptsize Original SOS Search ]{\includegraphics[width=.3\linewidth]{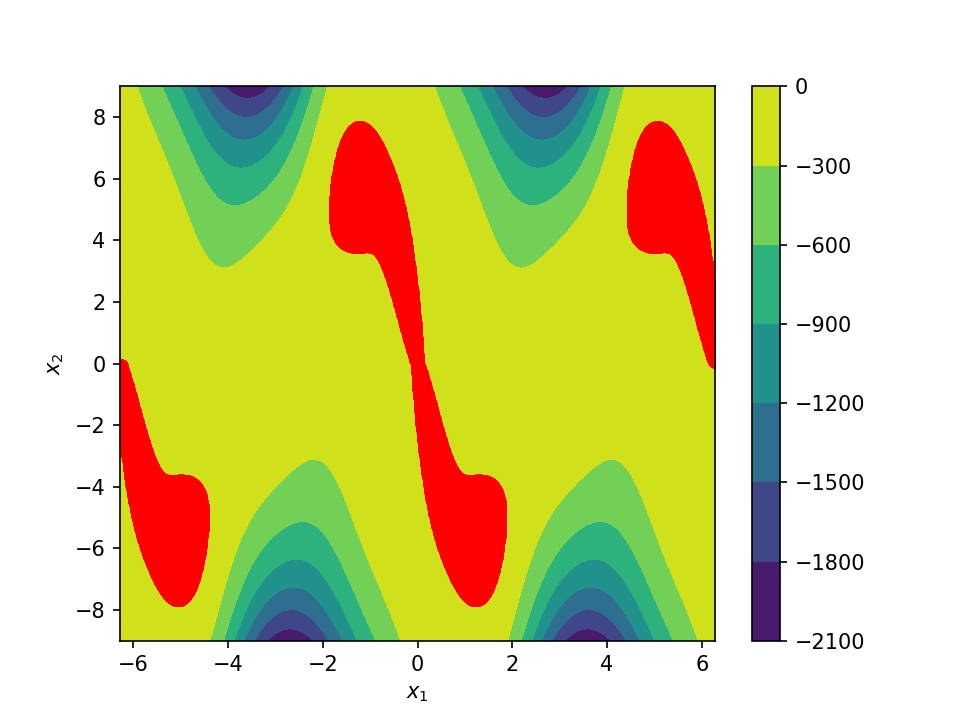}\label{fig: case_3_sos}}
	\subfigure[\scriptsize CC-SOS Search  ]{\includegraphics[width=.3\linewidth]{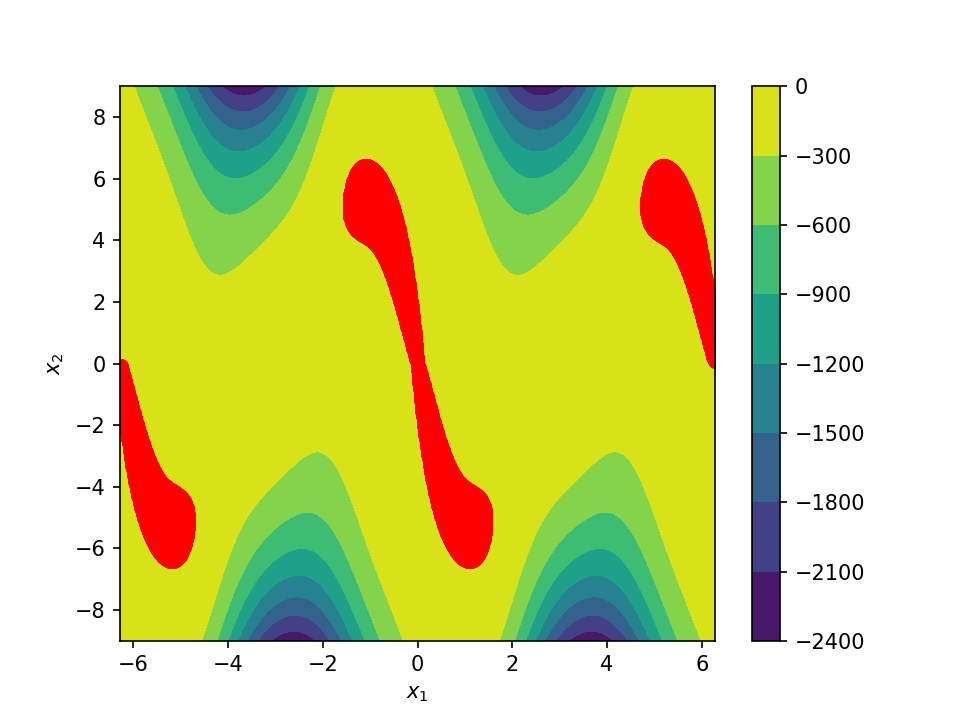}\label{fig: case_3_ccp}}
	\subfigure[\scriptsize DRCC-SOS Search  ]{\includegraphics[width=.3\linewidth]{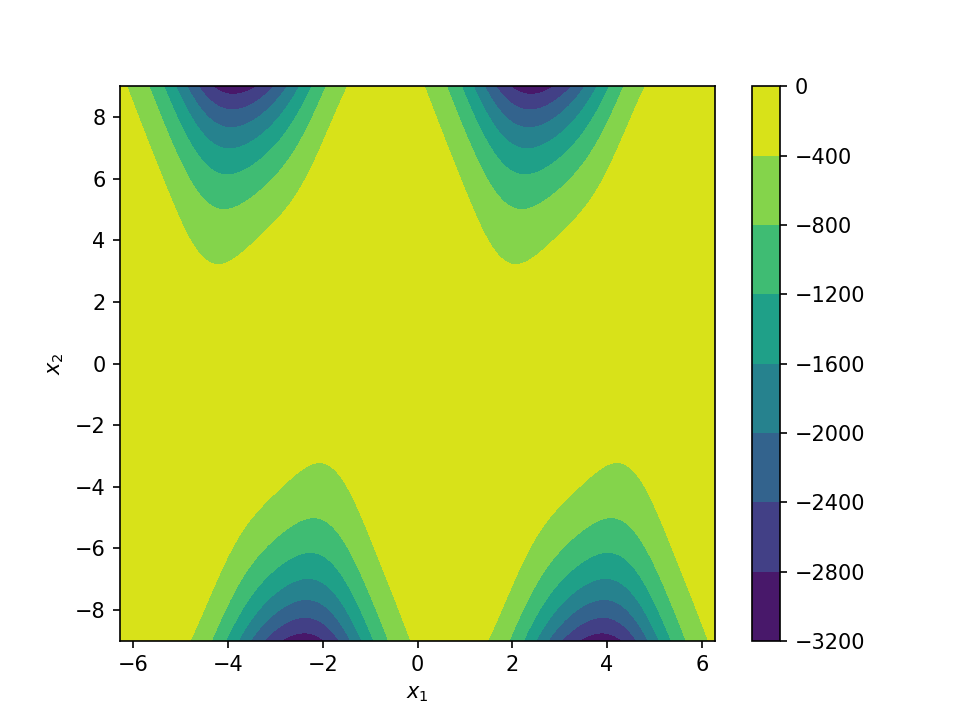}\label{fig: case_3_drccp}}
     \\[-1ex]
    \subfigure[\scriptsize Original NN Search  ]{\includegraphics[width=.3\linewidth]{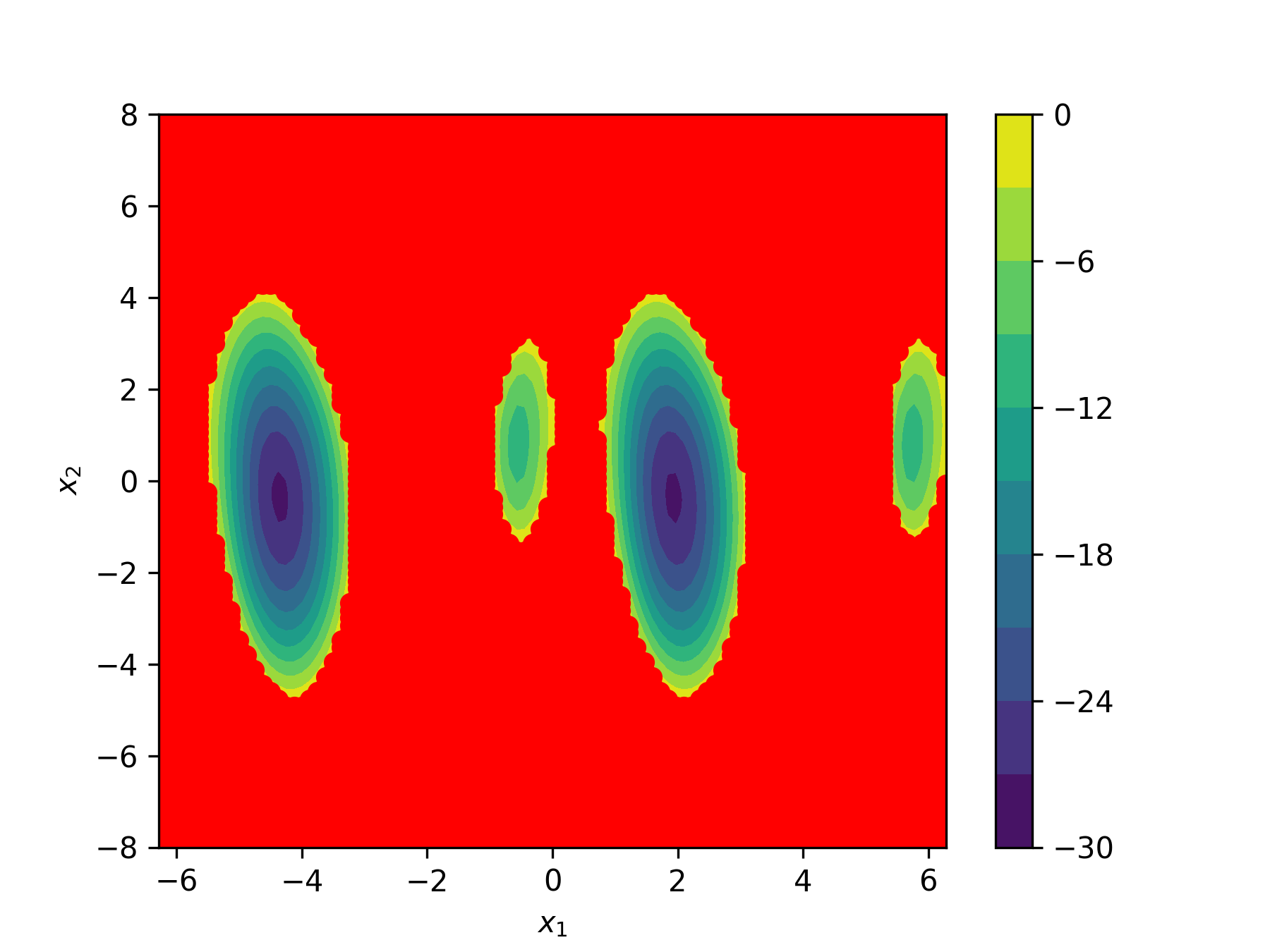}\label{fig: case_3_nn}}
	\subfigure[\scriptsize CC-NN Search ]{\includegraphics[width=.3\linewidth]{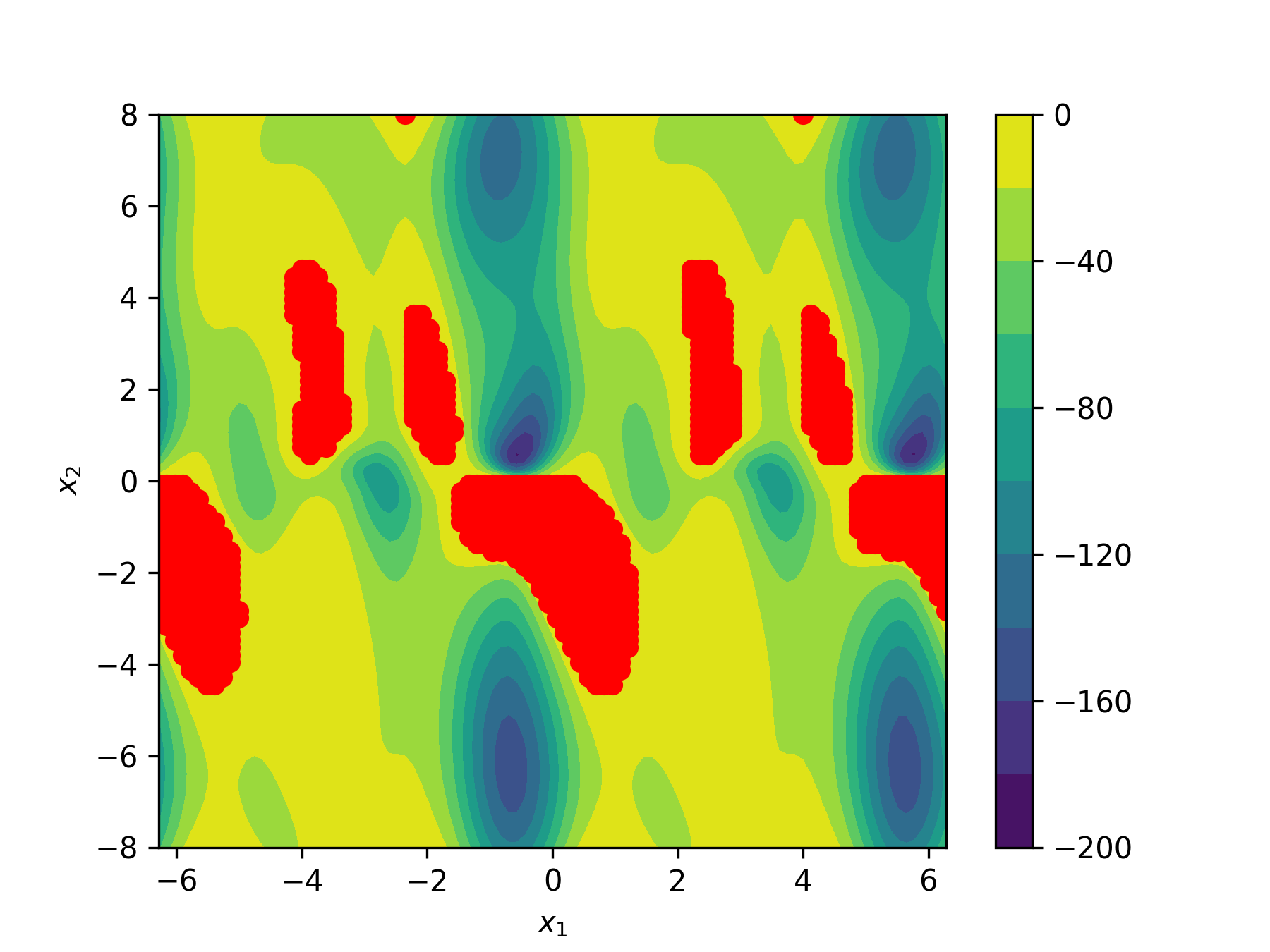}\label{fig: case_3_cc_nn}}
	\subfigure[\scriptsize DRCC-NN Search  ]{\includegraphics[width=.3\linewidth]{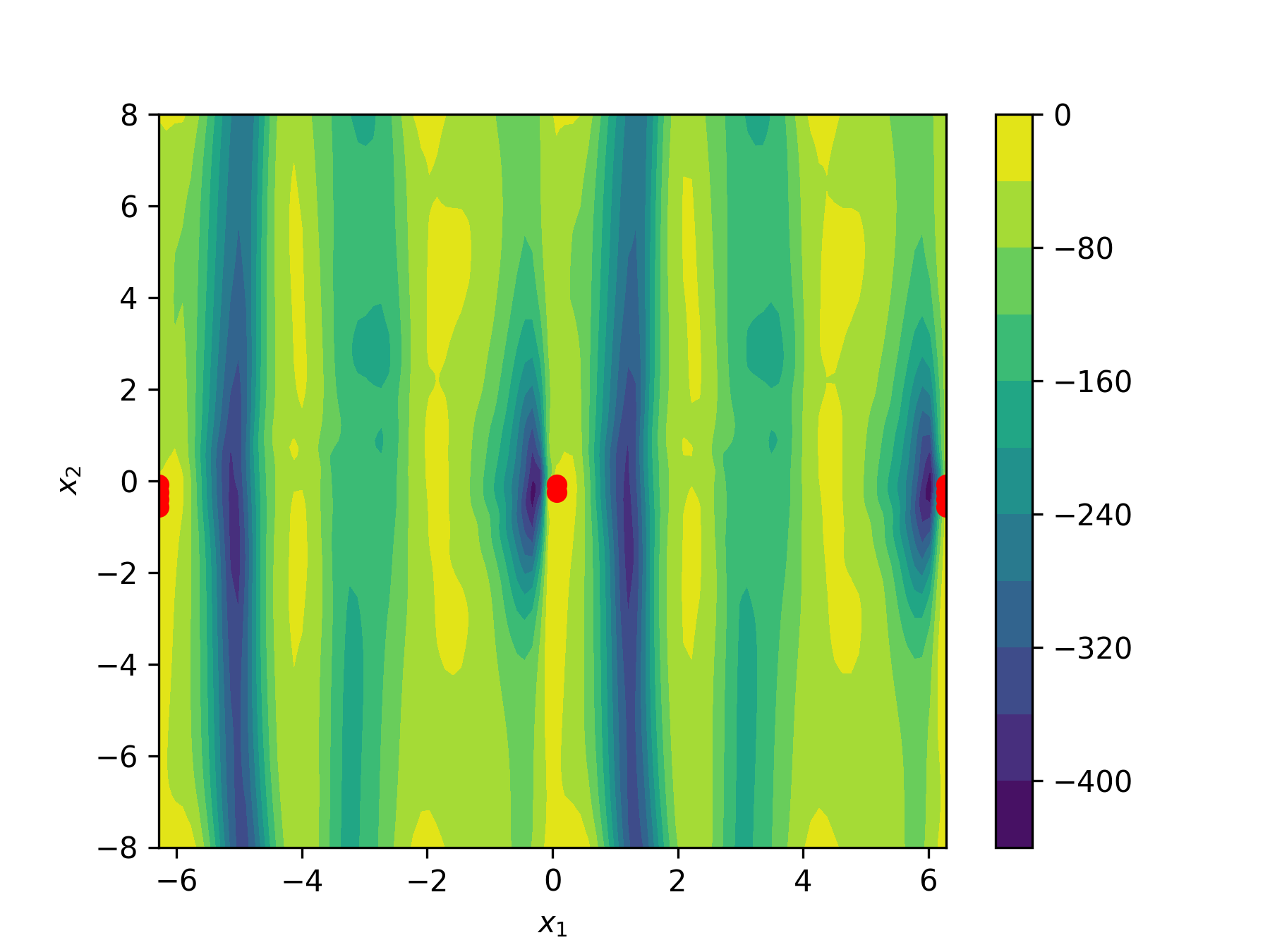}\label{fig: case_3_drcc_nn}}
	\caption{Results from SOS and NN formulations to design LF certificates for a pendulum with perturbation in the damping and length and online uncertainty $\bfxi^* = [-3.6,1.4]^{\top}$. The plots display the value of $\dot{V}$ over the domain, where the red areas indicate positive values (violation of the LF derivative requirement).}
    \label{fig: lf_compare_pendulum}
    \vspace*{-3ex}
\end{figure}

\section{Conclusions}

We investigated the synthesis of Lyapunov functions for uncertain closed-loop dynamical systems. With only finitely many offline uncertainty samples, we derived novel distributionally robust formulations of sum-of-squares and neural-network approaches. The evaluation shows that LFs learned with our DRCC formulations are valid even for out-of-sample model errors. Future work will consider joint CLF and control policy search under distributional uncertainty and applications to higher-dimensional robotic control systems.


\acks{The authors gratefully acknowledge support from NSF under grants IIS-2007141 and CCF-2112665.}


\bibliography{ref}

\end{document}